\newtheorem{theorem}{Theorem}[section]
\newtheorem{proposition}[theorem]{Proposition}
\newtheorem{lemma}[theorem]{Lemma}
\begin{document}
	
	\title{The chromatic spectrum of signed graphs}
	
	\vspace{3cm}
	
	\author{Yingli Kang \thanks{Fellow of the International Graduate School ``Dynamic Intelligent Systems''; yingli@mail.upb.de}, Eckhard Steffen\thanks{
			Paderborn Institute for Advanced Studies in
			Computer Science and Engineering,
			Paderborn University,
			Warburger Str. 100,
			33098 Paderborn,
			Germany;			
			es@upb.de}}
	
	\date{}
	
	\maketitle

	\begin{abstract}
The chromatic number $\chi((G,\sigma))$ of a signed graph $(G,\sigma)$ is the smallest number $k$ for which there is a function 
$c : V(G) \rightarrow \mathbb{Z}_k$ such that $c(v) \not= \sigma(e) c(w)$ for every edge $e = vw$. 
Let $\Sigma(G)$ be the set of all signatures of $G$. We study the 
chromatic spectrum  $\Sigma_{\chi}(G) =  \{\chi((G,\sigma))\colon\ \sigma \in \Sigma(G)\}$ of $(G,\sigma)$. 
Let $M_{\chi}(G) = \max\{\chi((G,\sigma))\colon\ \sigma \in \Sigma(G)\}$, and $m_{\chi}(G) = \min\{\chi((G,\sigma))\colon\ \sigma \in \Sigma(G)\}$.
We show that $\Sigma_{\chi}(G) = \{k : m_{\chi}(G) \leq k \leq M_{\chi}(G)\}$. We also prove some basic facts for critical graphs.

Analogous results are obtained for a notion of vertex-coloring of signed graphs which was introduced by 
M\'a\v cajov\'a, Raspaud, and \v Skoviera in \cite{MRS_2014}.

	\end{abstract}

\section{Introduction}
Graphs in this paper are simple and finite.  The vertex set of a graph $G$ is denoted by $V(G)$, and the edge set by $E(G)$.
A signed graph $(G,\sigma)$ is a graph $G$ and a function $\sigma : E(G) \rightarrow \{ \pm 1 \}$,
which is called a signature of $G$. The set $N_{\sigma} = \{e : \sigma(e) = -1\}$ is the set of negative edges of $(G,\sigma)$ and
$E(G) - N_{\sigma}$ the set of positive edges. For $v \in V(G)$, let $E(v)$ be the set of edges which are incident to $v$.
A switching at $v$ defines a graph $(G,\sigma')$ with $\sigma'(e) = -\sigma(e)$ for $e \in E(v)$ and $\sigma'(e) = \sigma(e)$ otherwise.
Two signed graphs $(G,\sigma)$ and $(G,\sigma^*)$ are {\em equivalent} if they can be obtained from each other by a sequence of switchings. We also say that
$\sigma$ and $\sigma^*$ are equivalent signatures of $G$.

A circuit in $(G,\sigma)$ is balanced, if it contains an even number of negative edges; otherwise it is unbalanced. The graph
$(G,\sigma)$ is unbalanced, if it contains an unbalanced circuit; otherwise $(G,\sigma)$ is balanced. 
It is well known (see e.g.~\cite{Raspaud_Zhu_2011})
that $(G,\sigma)$ is balanced if and only if it is equivalent to the signed graph with no negative edges, and $(G,\sigma)$ is antibalanced if
it is equivalent to the signed graph with no positive edges. 
Note, that a balanced bipartite graph is also antibalanced. The underlying unsigned graph of $(G,\sigma)$ is denoted by $G$.

In the 1980s Zaslavsky \cite{Zaslavsky_1982, Zaslavsky_1982_2, Zaslavsky_1984} started studying vertex colorings of signed graphs.
The natural constraints for a coloring $c$ of a signed graph $(G,\sigma)$ are, that (1) $c(v) \not= \sigma(e) c(w)$ for each edge $e=vw$,
and (2) that the colors can be inverted under switching, i.e.~equivalent signed graphs have the same chromatic number.
In order to guarantee these properties of a coloring, Zaslavsky \cite{Zaslavsky_1982} used the set $\{-k, \dots, 0, \dots ,k\}$ of $2k+1$ "signed colors" and studied the 
interplay between colorings and zero-free colorings through the chromatic polynomial. 

Recently, M\'a\v{c}ajov\'a, Raspaud, and \v{S}koviera \cite{MRS_2014}
modified this approach. If $n = 2k+1$, then let $M_n = \{0, \pm 1, \dots,\pm k\}$, and
if  $n = 2k$, then let $M_n = \{\pm 1, \dots,\pm k\}$.  
A mapping $c$ from $V(G)$ to $M_n$ is a {\em signed $n$-coloring} of $(G,\sigma)$, if $c(v) \not= \sigma(e) c(w)$ for each edge $e=vw$.
They  define $\chi_{\pm}((G,\sigma))$ to be the smallest number $n$ such that $(G,\sigma)$ has a signed $n$-coloring. 
We also say that $(G,\sigma)$ is {\em signed $n$-chromatic}.

In \cite{KS_2015} we study circular coloring of signed graphs. The related integer $k$-coloring of a signed graph $(G,\sigma)$ is defined as follows.
Let $\mathbb{Z}_k$ denote the cyclic group of integers modulo $k$, and the inverse of an element $x$ is denoted by $-x$. 
A function $c : V(G) \rightarrow \mathbb{Z}_k$ is a $k$-coloring of $(G,\sigma)$ if $c(v) \not= \sigma(e) c(w)$ for each edge $e=vw$. Clearly,
such colorings satisfy the constrains (1) and (2) of a vertex coloring of signed graphs. 
The {\em chromatic number} of a signed graph $(G,\sigma)$ is the smallest $k$ such that $(G,\sigma)$ has a $k$-coloring.
We also say that $(G,\sigma)$ is {\em $k$-chromatic}.

The following proposition describes the relation between these two coloring parameters for signed graphs. 

\begin{proposition} [\cite{KS_2015}] \label{pm}
If $(G,\sigma)$ is a signed graph, then $\chi_{\pm}((G,\sigma)) - 1 \leq \chi((G,\sigma)) \leq \chi_{\pm}((G,\sigma)) + 1$.
\end{proposition}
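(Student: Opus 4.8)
The plan is to pass colorings back and forth between the two color sets $\mathbb{Z}_k$ and $M_n$ by exploiting how similar these sets are with respect to the involution $x\mapsto -x$. The basic observation is that when $n=2k+1$ is odd, the map $\varphi\colon\mathbb{Z}_{2k+1}\to M_{2k+1}$ sending $i$ to $i$ for $0\le i\le k$ and to $i-(2k+1)$ for $k<i\le 2k$ is a bijection satisfying $\varphi(-x)=-\varphi(x)$; hence $c\colon V(G)\to\mathbb{Z}_{2k+1}$ is a $(2k+1)$-coloring of $(G,\sigma)$ if and only if $\varphi\circ c$ is a signed $(2k+1)$-coloring. So for odd orders the two notions coincide, and only the even orders require work.

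For the upper bound $\chi((G,\sigma))\le\chi_{\pm}((G,\sigma))+1$, put $n=\chi_{\pm}((G,\sigma))$ and fix a signed $n$-coloring $c$. If $n$ is odd we are done by the previous paragraph with $\chi((G,\sigma))\le n$. If $n$ is even, then $M_n\subseteq M_{n+1}$, so $c$ is also a signed $(n+1)$-coloring, hence (as $n+1$ is odd) an $(n+1)$-coloring; thus $\chi((G,\sigma))\le n+1$. In either case $\chi((G,\sigma))\le\chi_{\pm}((G,\sigma))+1$.

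For the lower bound, rewritten as $\chi_{\pm}((G,\sigma))\le\chi((G,\sigma))+1$, put $k=\chi((G,\sigma))$ and fix a $k$-coloring $c\colon V(G)\to\mathbb{Z}_k$. If $k$ is odd, then $\varphi\circ c$ is a signed $k$-coloring and $\chi_{\pm}((G,\sigma))\le k$. The case $k=2t$ is the heart of the matter: $\mathbb{Z}_{2t}$ has two elements fixed by negation, namely $0$ and $t$, while no set $M_n$ has more than one such element, so there is no relabelling of $\mathbb{Z}_{2t}$ into any $M_{2t}$ that respects $\pm$, and we must spend one extra color. Concretely I would define $\psi\colon\mathbb{Z}_{2t}\to M_{2t+1}$ by $\psi(0)=0$, $\psi(t)=t$, and $\psi(i)=i$, $\psi(2t-i)=-i$ for $1\le i\le t-1$. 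One checks that $\psi$ is injective with image $M_{2t+1}\setminus\{-t\}$, and that $\psi(x)=-\psi(y)$ forces $x=-y$ in $\mathbb{Z}_{2t}$ — the only delicate instance being $x=t$, where $-\psi(t)=-t$ is not in the image of $\psi$, so the implication holds vacuously. Consequently positive edges remain properly colored because $\psi$ is injective, and negative edges remain properly colored because $\psi(c(v))=-\psi(c(w))$ would force $c(v)=-c(w)$; so $\psi\circ c$ is a signed $(2t+1)$-coloring and $\chi_{\pm}((G,\sigma))\le 2t+1=k+1$.

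The only real obstacle is this parity/fixed-point mismatch in the even case; once $\psi$ is written down the verifications are routine arithmetic in $\mathbb{Z}_{2t}$ and $M_{2t+1}$. Everything in sight is sign-preserving on each edge, so constraints (1) and (2) in the definition of a coloring of a signed graph are respected automatically and need no separate discussion.
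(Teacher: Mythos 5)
Your argument is correct. The paper itself does not prove Proposition \ref{pm} --- it is imported from \cite{KS_2015} without proof --- so there is no in-text argument to compare against; judged on its own, your proof is complete and is the natural one. The key observation that for odd $n=2k+1$ the bijection $\varphi\colon\mathbb{Z}_{2k+1}\to M_{2k+1}$ commutes with negation (so the two coloring notions coincide for odd orders) is verified correctly, the upper bound via $M_{2k}\subseteq M_{2k+1}$ is immediate, and your injection $\psi\colon\mathbb{Z}_{2t}\to M_{2t+1}\setminus\{-t\}$ handles the genuine obstruction --- that $\mathbb{Z}_{2t}$ has two self-inverse elements while every $M_n$ has at most one --- with the vacuous case $\psi(x)=t$ treated properly. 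All edge constraints are checked in the right direction (injectivity for positive edges, the implication $\psi(x)=-\psi(y)\Rightarrow x=-y$ for negative edges), and the degenerate cases $k\in\{1,2\}$ cause no trouble. No gaps.
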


Let $G$ be a graph and $\Sigma(G)$ be the set of pairwise non-equivalent signatures on $G$.

The {\em chromatic spectrum} of $G$ is the set $\{\chi((G,\sigma))\colon\ \sigma \in \Sigma(G)\}$, which is denoted by $\Sigma_{\chi}(G)$.
Analogously,
{\em signed chromatic spectrum} of $G$ is the set $\{\chi_{\pm}((G,\sigma))\colon\ \sigma \in \Sigma(G)\}$. It is denoted by $\Sigma_{\chi_{\pm}}(G)$.
 Let $M_{\chi}(G) = \max\{\chi((G,\sigma))\colon\ \sigma \in \Sigma(G)\}$ and $m_{\chi}(G) = \min\{\chi((G,\sigma))\colon\ \sigma \in \Sigma(G)\}$.
Analogously, $M_{\chi_{\pm}}(G) = \max\{\chi_{\pm}((G,\sigma))\colon\ \sigma \in \Sigma(G)\}$ and $m_{\chi_{\pm}}(G) = \min\{\chi_{\pm}((G,\sigma))\colon\ \sigma \in \Sigma(G)\}$.

The following theorems are our main results. 

\begin{theorem} \label{cs}
If $G$ is a graph, then $\Sigma_{\chi}(G) = \{ k : m_{\chi}(G) \leq k \leq M_{\chi}(G)\}$. 
\end{theorem}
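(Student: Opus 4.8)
The plan is to regard the set of all signatures of $G$ as the vertex set of the hypercube $Q$ on $\{+1,-1\}^{E(G)}$, where two signatures are adjacent exactly when they differ on a single edge, and to show that $\sigma\mapsto\chi((G,\sigma))$ changes by at most $1$ along every edge of $Q$. Since $Q$ is connected and $\chi$ is switching invariant, $\Sigma_\chi(G)$ is precisely the image of $Q$ under this map; and the image of a connected graph under an integer-valued function that varies by at most $1$ along each edge is a set of consecutive integers, so it equals $\{k : m_\chi(G)\le k\le M_\chi(G)\}$. The reverse inclusion is immediate.

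Hence everything reduces to the claim: \emph{if $\sigma$ and $\sigma''$ differ on exactly one edge $e=xy$, then $|\chi((G,\sigma))-\chi((G,\sigma''))|\le 1$.} By symmetry it suffices to prove $\chi((G,\sigma''))\le\chi((G,\sigma))+1$. Put $k=\chi((G,\sigma))$ and fix a $k$-coloring $c\colon V(G)\to\mathbb{Z}_k$ of $(G,\sigma)$. Since $\sigma$ and $\sigma''$ agree off $e$, the only coloring constraint of $(G,\sigma'')$ that $c$ may violate is the one on $e$. I would first record the sub-lemma that a signed graph with a $j$-coloring also has a $(j+1)$-coloring: for odd $j$ this follows from the negation-preserving embedding of $\mathbb{Z}_j$ into $\mathbb{Z}_{j+1}$ as the symmetric residues $\{-\lfloor j/2\rfloor,\dots,\lfloor j/2\rfloor\}$, and for even $j$ one additionally recolors the (necessarily independent) set of vertices carrying the unique order-two color of $\mathbb{Z}_j$, each of which has two admissible new colors. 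Transport $c$ on $V(G)\setminus\{x\}$ into $\mathbb{Z}_{k+1}$ in this way; only the edge $e$ remains to be repaired. Now recolor $x$ — and, when $k+1$ is odd, a handful of order-two neighbors of $x$ as well — with a color of $\mathbb{Z}_{k+1}$ that none of the neighbors of $x$ can block (the order-two color of $\mathbb{Z}_{k+1}$ when $k+1$ is even, an otherwise unused color when $k+1$ is odd), which makes every edge at $x$, including $e$, satisfied. This is the desired $(k+1)$-coloring of $(G,\sigma'')$.

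Finally assemble the pieces: choose $\sigma_m,\sigma_M\in\Sigma(G)$ with $\chi((G,\sigma_m))=m_\chi(G)$ and $\chi((G,\sigma_M))=M_\chi(G)$, move from $\sigma_m$ to $\sigma_M$ inside $Q$ by flipping, one at a time, the edges in the symmetric difference of their sets of negative edges, and observe that the corresponding sequence of chromatic numbers starts at $m_\chi(G)$, ends at $M_\chi(G)$, and changes by at most $1$ at each step, hence attains every value in between. The real obstacle is the $1$-Lipschitz claim, and within it the parity bookkeeping between $\mathbb{Z}_k$ and $\mathbb{Z}_{k+1}$: negation is not translation invariant and the two cyclic groups contain different numbers of order-two elements, so one cannot merely inflate a coloring, and the recoloring of $x$ must be set up so that no conflict is created at $x$ regardless of its degree — which is exactly what the spare (order-two or unused) color of $\mathbb{Z}_{k+1}$ provides.
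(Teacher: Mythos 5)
Your proof is correct, but it takes a genuinely different route from the paper's. The paper keeps the signature fixed and descends through subgraphs: from a signature attaining $k\ge 4$ it extracts an induced $k$-critical subgraph (Theorem \ref{critical}), deletes a vertex to reach chromatic number $k-1$ (Lemma \ref{lem_remove'}), and then extends the signature back to all of $G$ without changing the chromatic number (Lemma \ref{lem_subgraph_reachable'}); the bottom of the interval is handled separately by Proposition \ref{bipartite}. You instead keep the graph fixed and vary the signature, showing that $\sigma\mapsto\chi((G,\sigma))$ is $1$-Lipschitz on the hypercube of signatures and applying the discrete intermediate value theorem along a flip-path from $\sigma_m$ to $\sigma_M$. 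The hard kernel is shared: your recoloring of $x$ with the spare color of $\mathbb{Z}_{k+1}$ (the order-two element when $k+1$ is even; the unused $-s$, after redirecting each order-two-colored neighbor $w$ of $x$ to $\sigma''(xw)s$, when $k+1=2s+1$ is odd) is the same parity bookkeeping the paper performs inside Lemma \ref{lem_remove'} when it assigns a safe color to the deleted vertex. The paper's route yields as by-products the facts about critical signed graphs (Proposition \ref{basic_p}, Theorem \ref{critical}) that the authors list among their contributions; your route is shorter, needs no notion of criticality, and proves the strictly stronger local fact that flipping a single edge changes $\chi$ by at most one. One minor simplification: in your sub-lemma the even-$j$ case needs no recoloring at all, since the symmetric-residue embedding of $\mathbb{Z}_{2t}$ into $\mathbb{Z}_{2t+1}$ already preserves every constraint ($-t$ is simply absent from the image); the redirection of the order-two-colored neighbors of $x$ is only needed afterwards, to free up the color $-s$ for $x$ itself.
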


\begin{theorem} \label{cs_pm}
If $G$ is a graph, then $\Sigma_{\chi_{\pm}}(G) = \{k : m_{\chi_{\pm}}(G) \leq k \leq M_{\pm}(G)\}$.
\end{theorem}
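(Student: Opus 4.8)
The strategy is the discrete intermediate--value argument, just as for Theorem~\ref{cs}, resting on a one--edge stability statement for $\chi_\pm$: if $\sigma'$ is obtained from $\sigma$ by flipping the sign of a single edge, then $|\chi_\pm((G,\sigma))-\chi_\pm((G,\sigma'))|\le 1$. Granting this, Theorem~\ref{cs_pm} follows quickly. Since $\chi_\pm$ is switching invariant it is well defined on $\Sigma(G)$, and since every map $E(G)\to\{\pm1\}$ is a signature, we may pick $\sigma_0,\sigma_1$ realizing $m_{\chi_\pm}(G)$ and $M_{\chi_\pm}(G)$ and interpolate $\sigma_0=\tau_0,\tau_1,\dots,\tau_t=\sigma_1$, each $\tau_{i+1}$ differing from $\tau_i$ in one edge. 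The containment $\Sigma_{\chi_\pm}(G)\subseteq\{k:m_{\chi_\pm}(G)\le k\le M_{\chi_\pm}(G)\}$ is immediate, and the stability statement forces the integer sequence $i\mapsto\chi_\pm((G,\tau_i))$, which runs from $m_{\chi_\pm}(G)$ to $M_{\chi_\pm}(G)$, to take every value in between, giving the reverse containment.

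So everything comes down to the stability statement, and by the symmetry of flipping an edge it suffices to show $\chi_\pm((G,\sigma'))\le\chi_\pm((G,\sigma))+1$ when $\sigma'$ flips the edge $e=uv$ of $\sigma$. Put $n=\chi_\pm((G,\sigma))$ and fix a signed $n$-coloring $c\colon V(G)\to M_n$ of $(G,\sigma)$. If $c$ is still a signed coloring of $(G,\sigma')$ we are done; otherwise it violates only $e$, that is $c(u)=-\sigma(e)c(v)$, which together with $c(u)\ne\sigma(e)c(v)$ forces $c(u),c(v)\ne 0$. We will construct a signed $(n+1)$-coloring of $(G,\sigma')$. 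The one fact that makes this possible is that the colors of $M_{n+1}$ lying outside $M_n$ are numerically separated from every value of $c$: if $n$ is even then $M_{n+1}=M_n\cup\{0\}$, and if $n=2k+1$ then $M_{n+1}=(M_n\setminus\{0\})\cup\{k+1,-(k+1)\}$.

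If $n$ is even, recolor $u$ with $0$ and change nothing else: every edge $uw$ is fine because the forbidden value $\sigma'(uw)c(w)=\pm c(w)$ is nonzero ($0\notin M_n$), and all other edges are unaffected and, not being $e$, stay satisfied; the palette is $M_n\cup\{0\}=M_{n+1}$. If $n=2k+1$, let $Z=c^{-1}(0)$ and define $c'$ by $c'(u)=-(k+1)$, by $c'(w)=-(k+1)$ for $w\in Z$ joined to $u$ by a negative edge and $c'(w)=k+1$ for every other $w\in Z$, and by $c'=c$ elsewhere. A routine case check on edge types shows $c'$ is a signed coloring of $(G,\sigma')$: no edge lies inside $Z$ (it would already be violated by $c$); an edge with both ends untouched is unchanged and is not $e$, hence fine; an edge with one recolored end and one untouched end is fine since $\pm(k+1)$ differs from every $\pm c(w)$ with $c(w)\in M_n\setminus\{0\}$, and in particular $e$ is fine because $c(v)\ne 0$; and for an edge $uw$ with $w\in Z$ the vertex $w$ received $+(k+1)$ exactly when $uw$ is positive and $-(k+1)$ exactly when $uw$ is negative, so $\sigma'(uw)c'(w)=k+1\ne-(k+1)=c'(u)$. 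Hence $\chi_\pm((G,\sigma'))\le n+1$, completing the stability statement.

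I expect the stability statement to be the only real obstacle. The obvious attempt --- repairing the single bad edge by recoloring one of its endpoints inside $M_{n+1}$ --- fails when that endpoint has large degree, since it may forbid more than $|M_{n+1}|$ colors; the substance of the argument is that the enlarged palette always contains a color ($0$ when $n$ is even, $\pm(k+1)$ when $n$ is odd) that clashes with no color of $c$, so a single extra color always absorbs the broken edge, which is precisely why the spectrum has no gap. Finally, one cannot shortcut this via Proposition~\ref{pm} and Theorem~\ref{cs}: chaining $\chi_\pm\to\chi\to\chi\to\chi_\pm$ across a single flip only yields $|\chi_\pm((G,\sigma))-\chi_\pm((G,\sigma'))|\le 3$.
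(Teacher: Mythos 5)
Your proof is correct, but it takes a genuinely different route from the paper. The paper descends from $M_{\chi_{\pm}}(G)$ one step at a time using critical-graph machinery: given $\sigma$ with $\chi_{\pm}((G,\sigma))=k$, it extracts an induced signed $k$-critical subgraph (Theorem~\ref{s_critical}), deletes a vertex to drop the value to exactly $k-1$ (Lemma~\ref{lem_remove}), and then extends the signature back to all of $G$ without raising the chromatic number (Lemma~\ref{lem_subgraph_reachable}). You instead prove a Lipschitz-type stability statement --- flipping the sign of a single edge changes $\chi_{\pm}$ by at most $1$ --- and run a discrete intermediate-value argument along a path of signatures from one realizing $m_{\chi_{\pm}}(G)$ to one realizing $M_{\chi_{\pm}}(G)$. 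Your stability proof is sound: the only possibly violated edge after a flip is the flipped edge $e=uv$ itself, its endpoints are forced to have nonzero colors, and the enlarged palette $M_{n+1}$ always contains a color ($0$ when $n$ is even, $\pm(k+1)$ when $n=2k+1$) that clashes with nothing in $M_n$; your case analysis for the odd case, including recoloring all of $c^{-1}(0)$ and the observation that no edge can join two vertices of $c^{-1}(0)$, is complete. What each approach buys: yours is shorter and self-contained, needs no notion of criticality, and the one-edge stability lemma is of independent interest as a statement about how $\chi_{\pm}$ varies over the hypercube of signatures. The paper's route is heavier but produces reusable by-products (existence of induced $i$-critical subgraphs for every $i\le k$, the characterizations in Propositions~\ref{basic_p} and~\ref{basic_p'}), and the same template transfers verbatim to the $\mathbb{Z}_k$-coloring version in Section~\ref{chrom_spec}, whereas your palette-extension trick exploits the nesting of the sets $M_n$ and does not carry over directly to $\mathbb{Z}_k \subsetneq\!\!\!\!\!/ \;\mathbb{Z}_{k+1}$.
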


Theorems \ref{cs} and \ref{cs_pm} will be proved in Sections \ref{chrom_spec} and \ref{sig_chrom_spec}, respectively.

\section{The chromatic spectrum of a graph}  \label{chrom_spec}

We start with the determination of $m_{\chi}(G)$. 

\begin{proposition} \label{bipartite}
	Let $G$ be a nonempty graph. The following statements hold.
	\begin{enumerate}
		\item $\Sigma_{\chi}(G) = \{1\}$ if and only if $m_{\chi}(G) = 1$ if and only if $E(G)= \emptyset$.
		\item if $E(G) \not = \emptyset$, then 
		$\Sigma_{\chi}(G) = \{2\}$ if and only if $m_{\chi}(G) = 2$ if and only if $G$ is bipartite.
		\item If $G$ is not bipartite, then $m_{\chi}(G) = 3$.
	\end{enumerate}
\end{proposition}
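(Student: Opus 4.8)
The plan is to handle the three statements one after another, in each case exploiting the arithmetic of the small group $\mathbb{Z}_k$ involved. For part (1), note that a $1$-coloring is a map into $\mathbb{Z}_1 = \{0\}$, so the constraint $c(v) \ne \sigma(e)c(w)$ becomes $0 \ne 0$ and is violated by every edge; hence, for any (equivalently, every) signature, $(G,\sigma)$ is $1$-chromatic if and only if $E(G) = \emptyset$. The three conditions then chain together: $\Sigma_{\chi}(G) = \{1\}$ gives $m_{\chi}(G) = 1$ since $m_{\chi}(G) = \min \Sigma_{\chi}(G)$; $m_{\chi}(G) = 1$ forces some signature to be $1$-chromatic, hence $E(G) = \emptyset$; and if $E(G) = \emptyset$ then the (unique) signature is $1$-chromatic via the constant map, so $\Sigma_{\chi}(G) = \{1\}$.

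For part (2), the crucial observation is that in $\mathbb{Z}_2$ we have $-x = x$, so $\sigma(e)c(w) = c(w)$ no matter what $\sigma(e)$ is; thus a $2$-coloring of $(G,\sigma)$ is precisely a proper $2$-coloring of the underlying graph $G$, independently of $\sigma$. Combined with part (1), which forbids a $1$-coloring once $E(G) \ne \emptyset$, this shows that (under the hypothesis $E(G)\ne\emptyset$) one signature is $2$-chromatic iff every signature is $2$-chromatic iff $G$ is bipartite, and the required equivalences follow just as in part (1).

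For part (3), if $G$ is not bipartite then in particular $E(G) \ne \emptyset$, so parts (1) and (2) give $1 \notin \Sigma_{\chi}(G)$ and $2 \notin \Sigma_{\chi}(G)$, whence $m_{\chi}(G) \ge 3$. For the matching upper bound I would exhibit one good signature: take $\sigma_{-}$ with all edges negative together with the constant coloring $c \equiv 1 \colon V(G) \to \mathbb{Z}_3$. For every edge $e = vw$ we have $\sigma_{-}(e)c(w) = -1 = 2 \ne 1 = c(v)$, so $c$ is a valid $3$-coloring and $m_{\chi}(G) \le \chi((G,\sigma_{-})) \le 3$. Combining the two bounds gives $m_{\chi}(G) = 3$.

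There is no real obstacle here; the only things to get right are the modular facts $-x = x$ in $\mathbb{Z}_2$ (which renders the signature irrelevant for $2$-colorings) and $1 \ne -1$ in $\mathbb{Z}_3$ (which makes the all-negative signature $3$-colorable by a constant map). It is also worth stating explicitly that ``$G$ not bipartite'' already implies $E(G) \ne \emptyset$, so the hypotheses of the three parts fit together without gaps.
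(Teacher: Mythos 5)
Your proof is correct and follows essentially the same route as the paper: parts (1) and (2) are the ``obvious'' observations the paper omits (made rigorous via the arithmetic of $\mathbb{Z}_1$ and $\mathbb{Z}_2$), and for part (3) you use exactly the paper's witness, namely the all-negative signature with the constant coloring $c\equiv 1$ in $\mathbb{Z}_3$, combined with the lower bound from parts (1) and (2).
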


\begin{proof}
Statements 1. and 2. are obvious. For statement 3 consider $(G,\sigma)$ where $\sigma$ is the signature with all edges negative. 
Then $c : V(G) \rightarrow \mathbb{Z}_3$ with $c(v)=1$ is a 3-coloring of $G$. Since
$G$ is not bipartite the statement follows with statements 1. and 2.
\end{proof}

If $(G,\sigma)$ is a signed graph and $u \in V(G)$, then $\sigma_u$ denotes the restriction of $\sigma$ to $G-u$.
A $k$-chromatic graph $(G,\sigma)$ is {\em $k$-chromatic critical} if $\chi((G-u,\sigma_u)) < k$,  for every $u \in V(G)$. 
In the following proposition, we will give some basic facts on $k$-chromatic critical graphs.
The complete graph on $n$ vertices is denoted by $K_n$.

\begin{proposition}\label{basic_p}
Let $(G,\sigma)$ be a signed graph. 	
\begin{enumerate}
	\item $(G,\sigma)$ is 1-critical if and only if $G = K_1$
	\item $(G,\sigma)$ is 2-critical if and only if $G = K_2$. 
	\item $(G,\sigma)$ is 3-critical if and only if $G$ is an odd circuit.
\end{enumerate}
\end{proposition}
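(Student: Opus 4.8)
The plan is to handle the three cases in turn, the crucial observation being that over $\mathbb{Z}_1$ and $\mathbb{Z}_2$ the signature plays no role. Indeed $\chi((G,\sigma))=1$ exactly when $E(G)=\emptyset$ (an edge $e=vw$ forces $c(v)=0=\sigma(e)c(w)$ in $\mathbb{Z}_1$), and since $-1\equiv 1\pmod 2$ a $2$-coloring of $(G,\sigma)$ is nothing but a proper $2$-coloring of the underlying graph, so $\chi((G,\sigma))\le 2$ exactly when $G$ is bipartite. For part~1: $1$-criticality gives $\chi((G,\sigma))=1$, hence $E(G)=\emptyset$, and $\chi((G-u,\sigma_u))<1$ for every $u$, which forces $G-u$ to be the null graph; thus $G=K_1$, and the converse is immediate. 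For part~2: $2$-criticality gives that $G$ is bipartite with at least one edge and that $E(G-u)=\emptyset$ for every $u$, i.e.\ every edge of $G$ is incident with every vertex of $G$; in particular every vertex of $G$ is an endpoint of a fixed edge $ab$, so $V(G)\subseteq\{a,b\}$ and $G=K_2$, and again the converse is immediate.

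For the forward implication of part~3 I would first record that every signed circuit $(C,\sigma)$ satisfies $\chi((C,\sigma))\le 3$: color the cycle greedily around $C$, so that each vertex except the last has one already-colored neighbour and the last has two, leaving at least one admissible color in $\mathbb{Z}_3$ at every step. Now if $G$ is an odd circuit then $G$ is not bipartite, so $\chi((G,\sigma))=3$ by Proposition~\ref{bipartite} together with the bound just noted; and $G-u$ is a path for every $u$, hence bipartite, so $\chi((G-u,\sigma_u))\le 2<3$. Thus $(G,\sigma)$ is $3$-critical for every $\sigma$.

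The heart of the proposition is the converse of part~3, and it reduces to a purely structural statement. If $(G,\sigma)$ is $3$-critical, then $\chi((G,\sigma))=3$ makes $G$ non-bipartite, while $\chi((G-u,\sigma_u))<3$ for every $u$ makes $G-u$ bipartite for every $u$ (again because a signed graph admits a $2$-coloring if and only if its underlying graph is bipartite). So it is enough to prove: if $G$ is not bipartite but $G-u$ is bipartite for every vertex $u$, then $G$ is an odd circuit. Being non-bipartite, $G$ contains an odd cycle $C$; no vertex of $G$ can lie outside $C$, since deleting such a vertex would leave $C$ intact and contradict the bipartiteness of that vertex-deleted subgraph, so $V(G)=V(C)$ and $C$ spans $G$. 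Finally $G$ has no chord: a chord $xy$ of $C$ splits $C$ into two internally disjoint $x$--$y$ paths $P_1,P_2$, both of length at least $2$ because $x$ and $y$ are non-consecutive on $C$, and since $|E(P_1)|+|E(P_2)|=|V(C)|$ is odd, one of the cycles $C_i:=P_i+xy$ is odd; picking an internal vertex $w$ of the other path yields $w\notin V(C_i)$ with $C_i\subseteq G-w$, contradicting bipartiteness of $G-w$. Hence $E(G)=E(C)$ and $G=C$ is an odd circuit.

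I expect this last paragraph to be the only real obstacle, and the delicate point is purely combinatorial: one must set up the chord argument so that the odd cycle it produces genuinely misses a vertex of $G$, which works precisely because a chord joins two non-consecutive vertices of $C$, so the complementary arc always has an interior vertex available to delete. Everything else — the cases $k=1,2$ and the easy half of $k=3$ — is bookkeeping once the sign-irrelevance over $\mathbb{Z}_1$ and $\mathbb{Z}_2$ has been recorded.
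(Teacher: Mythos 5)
Your proof is correct and follows essentially the same route as the paper's: reduce part 3 to the structural fact that a non-bipartite graph all of whose vertex-deleted subgraphs are bipartite must be a spanning odd circuit without chords. You spell out the cases $k=1,2$, the easy direction of $k=3$, and the chord argument in more detail than the paper (which treats them as obvious), but the underlying ideas are identical.
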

\begin{proof}
Statements 1. and 2. are obvious. An odd circuit with any signature is 3-critical. For the other direction let $G$ be a 3-critical graph.
Note, that  (*) $G-u$ is bipartite for every $u \in V(G)$ by Lemma \ref{bipartite}. 
Since $G$ is not bipartite it follows that every vertex of $G$ is contained in all odd circuits of $G$,
and by (*) every odd circuit $C$ is hamiltonian. $C$ cannot contain a chord, since for otherwise $G$ contains a non-hamiltonian odd circuit,
a contradiction.
Hence, $G$ is an odd circuit.

\end{proof}

\begin{lemma}\label{lem_remove'}
Let $k \geq 1$ be an integer. If $(G,\sigma)$ is $k$-chromatic, then $\chi((G-u,\sigma_u))\in \{k,k-1\}$, for every $u \in V(G)$. In particular,
if $(G,\sigma)$ is $k$-critical, then $\chi((G-u,\sigma_u)) = k-1$.
\end{lemma}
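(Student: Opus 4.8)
The plan is to prove the two inequalities $\chi((G-u,\sigma_u)) \le \chi((G,\sigma))$ and $\chi((G-u,\sigma_u)) \ge \chi((G,\sigma)) - 1$ separately, and then deduce the ``in particular'' clause from the definition of $k$-criticality. The first inequality is immediate: if $c$ is a $k$-coloring of $(G,\sigma)$, then its restriction to $V(G-u)$ uses colors from $\mathbb{Z}_k$ and still satisfies $c(v) \ne \sigma_u(e)c(w)$ for every edge $e=vw$ of $G-u$, since every such edge is an edge of $G$ with the same sign. Hence $(G-u,\sigma_u)$ is $k$-colorable, so $\chi((G-u,\sigma_u)) \le k$.

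For the lower bound I would argue by contradiction: suppose $\chi((G-u,\sigma_u)) = \ell \le k-2$, and let $c'$ be an $\ell$-coloring of $(G-u,\sigma_u)$ with colors in $\mathbb{Z}_\ell$. The idea is to extend $c'$ to a $(k-1)$-coloring of $(G,\sigma)$, contradicting $\chi((G,\sigma)) = k$. Embed $\mathbb{Z}_\ell$ into $\mathbb{Z}_{k-1}$ in the obvious way (as $\{0,1,\dots,\ell-1\}$); since $\ell \le k-2 < k-1$ and $k-1 \ge 2$ in the nontrivial cases, there are at least two ``fresh'' colors available in $\mathbb{Z}_{k-1}$ not used by $c'$, and in particular a color $a$ with $a \ne 0$ and $-a \ne 0$ (or we can just pick any fresh color when $k-1$ is such that this is possible; the point is we need a color $a$ for which $c'(w) \ne \sigma(e)a$ for all neighbors $w$ of $u$). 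Because $c'$ only uses $\ell \le k-3$ colors and their negatives among $\{0,\dots,\ell-1\}$ — actually the set $\{\sigma(e)c'(w) : e=uw \in E(u)\}$ is contained in the union of $\{0,\dots,\ell-1\}$ and its negation, which has at most $2\ell-1 \le 2(k-2)-1 = 2k-5$ elements, and this may exceed $k-2$, so a pure counting argument on $\mathbb{Z}_{k-1}$ does not immediately work.

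This is where the main obstacle lies, and the cleaner route is to use Proposition~\ref{bipartite} and handle small $k$ directly. For $k \le 3$ the claim follows from Propositions~\ref{bipartite} and~\ref{basic_p}: if $k=1$ then $G=K_1$ and $G-u$ is empty; if $k=2$ then $G$ has an edge, so $\chi((G-u,\sigma_u)) \ge 1$ but also $G-u$ could be edgeless giving $1 = k-1$; if $k=3$ then $G$ is non-bipartite, and $G-u$ is either bipartite with an edge (giving $2=k-1$) or not, but a non-bipartite $G-u$ would have $\chi((G-u,\sigma_u))\ge 3 = k$, which combined with the upper bound gives exactly $k$ or $k-1$. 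For larger $k$ I would instead prove the contrapositive using the switching-invariance and a direct extension: given $\chi((G-u,\sigma_u)) \le k-2$, switch so that all edges at $u$ are positive (possible since we may switch at $u$ alone), then the constraint at $u$ becomes $c(u) \ne c(w)$ for all neighbors $w$; color $G-u$ with $k-2$ colors from $\mathbb{Z}_{k-1}$ and give $u$ a color from $\mathbb{Z}_{k-1}$; we need $c(u) \notin \{c(w): uw \in E(u)\} \cup \{-c(w): uw\in E(u)\}$ — wait, after switching the negative-edge condition is gone at $u$, so we only need $c(u) \ne c(w)$, but we must still respect that $0$ behaves specially. The honest fix is: choose the $(k-2)$-coloring of $G-u$ to avoid color $0$ entirely if possible, or note that after the switch the only forbidden colors for $u$ are the $c(w)$, of which there are at most $k-2$ distinct values in $\mathbb{Z}_{k-1}$, leaving at least one color free. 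Thus $\chi((G,\sigma)) \le k-1$, a contradiction. Finally, the ``in particular'' statement is immediate: if $(G,\sigma)$ is $k$-critical then by definition $\chi((G-u,\sigma_u)) < k$ for every $u$, so combined with $\chi((G-u,\sigma_u)) \in \{k,k-1\}$ we get $\chi((G-u,\sigma_u)) = k-1$.
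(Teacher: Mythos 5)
Your upper bound and the derivation of the ``in particular'' clause are fine, and you correctly diagnosed that a pure counting argument in $\mathbb{Z}_{k-1}$ does not work. But the argument you finally settle on for large $k$ has two genuine gaps. First, the claim that you can ``switch so that all edges at $u$ are positive\dots since we may switch at $u$ alone'' is false: a switching at $u$ negates \emph{every} edge of $E(u)$, so if $u$ is incident with edges of both signs it still is afterwards. The repair is to switch instead at each neighbor $w$ of $u$ with $\sigma(uw)=-1$; this does make $E(u)$ all positive, replaces $\sigma_u$ by an equivalent signature on $G-u$ (so a $(k-2)$-coloring still exists there) and does not change $\chi((G,\sigma))$. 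That part is fixable.

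The second gap is the essential one. The step ``color $G-u$ with $k-2$ colors from $\mathbb{Z}_{k-1}$'' (equivalently, your earlier ``embed $\mathbb{Z}_\ell$ into $\mathbb{Z}_{k-1}$ in the obvious way'') is unjustified: a $(k-2)$-coloring of $(G-u,\sigma_u)$ takes values in $\mathbb{Z}_{k-2}$, where negation is computed modulo $k-2$, and the inclusion of $\{0,\dots,k-3\}$ into $\mathbb{Z}_{k-1}$ does not commute with negation. For example, for $k\geq 5$ a negative edge $vw$ of $G-u$ with $\phi(v)=2$ and $\phi(w)=k-3$ is properly colored in $\mathbb{Z}_{k-2}$ (there $-\phi(w)=1\neq 2$) but violates the constraint in $\mathbb{Z}_{k-1}$ (there $-\phi(w)=2=\phi(v)$). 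Switching at the neighbors of $u$ does nothing to the negative edges inside $G-u$, so this problem survives your fix of the first gap. The paper's proof is built around exactly this point: it re-embeds the colors by sending $x$ to $x+1$ for every $x\geq\lceil(k-1)/2\rceil$, a map that commutes with negation and hence preserves properness on all of $G-u$, while freeing the color $\lfloor k/2\rfloor$ for $u$; when $k$ is odd that color is self-inverse in $\mathbb{Z}_{k-1}$, so no conflict with neighbors of $u$ can arise, and when $k$ is even one small local recoloring handles the single possible conflict. Without a negation-equivariant embedding of this kind (or an equivalent device), your extension step does not go through.
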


\begin{proof}
For $k\in \{1,2\}$, the statement follows with Proposition \ref{bipartite}. Hence, we may assume that $k\geq3$.
Clearly, $\chi((G-u,\sigma_u)) \leq \chi((G,\sigma)) = k$.
Suppose to the contrary that $\chi((G-u,\sigma_u))\leq k-2$, and 
let $\phi$ be a $(k-2)$-coloring of $(G-u,\sigma_u)$.
We extend $\phi$ to a $(k-1)$-coloring of $(G,\sigma)$.
If $k$ is odd, then change color $x$ to $x+1$ for each $x\geq \frac{k-1}{2}$ and assign color $\frac{k-1}{2}$ to vertex $u$, and we are done.
If $k$ is even, then change color $x$ to $x+1$ for each $x\geq \frac{k}{2}$, and assign color $\frac{k}{2}$ to vertex $u$.
If $\phi(v)=\frac{k-2}{2}$ for a vertex $v$ and $\sigma(uv)=-1$, then recolor $v$ with color $\frac{k}{2}$ to obtain
a $(k-1)$-coloring of $(G,\sigma)$. Hence $\chi((G,\sigma)) \leq k-1 < k$, a contradiction. 
Clearly, if $(G,\sigma)$ is $k$-critical, then $\chi((G-u,\sigma_u)) = k-1$.
\end{proof}

The following statements are direct consequence of Lemma \ref{lem_remove'}.

\begin{theorem} \label{critical}
Let $(G,\sigma)$ be a signed graph and $k \geq 1$. If $\chi((G,\sigma)) = k$, then
$(G,\sigma)$ contains an induced $i$-critical subgraph for each $i \in \{1, \dots,k\}$.
\end{theorem}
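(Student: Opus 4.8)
The plan is to proceed by induction on $k$, with Lemma \ref{lem_remove'} as the workhorse. The case $k=1$ is trivial, since a $1$-chromatic graph has an edgeless component, hence contains $K_1$ as an induced subgraph, which is $1$-critical by Proposition \ref{basic_p}. Now suppose $\chi((G,\sigma)) = k \geq 2$. First I would pass to an induced $k$-critical subgraph: among all induced subgraphs $(H,\sigma|_H)$ of $(G,\sigma)$ with $\chi((H,\sigma|_H)) = k$, pick one that is vertex-minimal. By minimality, deleting any vertex of $H$ drops the chromatic number below $k$, so $(H,\sigma|_H)$ is $k$-critical; this handles the case $i = k$. This reduction is clean because $\chi$ is monotone under taking induced subgraphs and the deletion $(H-u,\sigma_H|_{H-u})$ is exactly the restriction appearing in the definition of criticality.

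Next, having a $k$-critical induced subgraph $(H,\sigma')$ in hand, I would delete a single vertex $u$ of $H$. By Lemma \ref{lem_remove'}, $\chi((H-u,\sigma'_u)) = k-1$. Then $(H-u, \sigma'_u)$ is an induced subgraph of $(G,\sigma)$ (an induced subgraph of an induced subgraph is induced) with chromatic number exactly $k-1$, and by the induction hypothesis applied to it, it contains an induced $i$-critical subgraph for every $i \in \{1,\dots,k-1\}$. Since being an induced subgraph is transitive, all of these are induced subgraphs of $(G,\sigma)$ as well. Combining with the $k$-critical subgraph $H$ itself gives induced $i$-critical subgraphs for all $i \in \{1,\dots,k\}$, completing the induction.

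I do not expect a serious obstacle here; the only point requiring a little care is making sure that all the "restriction of the signature" bookkeeping is consistent — that is, that the signed graph produced at each stage really is an \emph{induced} signed subgraph of $(G,\sigma)$ with the inherited signature, so that the induction hypothesis applies verbatim. This is purely routine once one observes that restricting $\sigma$ first to $V(H)$ and then to $V(H)\setminus\{u\}$ is the same as restricting directly to $V(H)\setminus\{u\}$. The structural content is entirely carried by Lemma \ref{lem_remove'}, which guarantees the chromatic number drops by exactly one upon deleting a vertex of a critical graph; without that exactness one could only conclude the existence of some induced subgraph with chromatic number in $\{1,\dots,k\}$ of each value it happens to realize, not of every intermediate value.
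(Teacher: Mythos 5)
Your proof is correct and follows exactly the route the paper intends: the paper states Theorem \ref{critical} as a direct consequence of Lemma \ref{lem_remove'} without writing out the details, and your argument (pass to a vertex-minimal induced subgraph realizing chromatic number $k$ to get a $k$-critical one, delete a vertex to drop to $k-1$ by the lemma, and induct) is precisely the routine verification being omitted. No issues.
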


\begin{lemma}\label{lem_subgraph_reachable'}
Let $k \geq 3$ be an integer and $H$ be an induced subgraph of a graph $G$. If $k \in {\Sigma_{\chi}(H)}$, then $k\in{\Sigma_{\chi}(G)}$.
\end{lemma}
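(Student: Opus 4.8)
The plan is to prove the lemma constructively. Since $k\in\Sigma_\chi(H)$, fix a signature $\sigma_H\in\Sigma(H)$ with $\chi((H,\sigma_H))=k$ and a $k$-coloring $c_H\colon V(H)\to\mathbb{Z}_k$ of $(H,\sigma_H)$. I will produce a signature $\sigma_G$ of $G$ that restricts to $\sigma_H$ on $E(H)$ and satisfies $\chi((G,\sigma_G))=k$; this gives $k\in\Sigma_\chi(G)$ as required. Note that for \emph{any} extension $\sigma_G$ of $\sigma_H$ the inequality $\chi((G,\sigma_G))\geq k$ is automatic: restricting any coloring of $(G,\sigma_G)$ to $V(H)$ yields a coloring of $(H,\sigma_H)$ with the same palette, so at least $k$ colors are needed. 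Hence the whole task is to sign the edges in $E(G)\setminus E(H)$ so that $\chi((G,\sigma_G))\leq k$, which I will certify by extending $c_H$ to a $k$-coloring $c_G$ of $(G,\sigma_G)$.

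For the extension, I would put $c_G(v)=c_H(v)$ for $v\in V(H)$ and $c_G(v)=1$ for every $v\in V(G)\setminus V(H)$; here $1$ makes sense in $\mathbb{Z}_k$ and satisfies $1\neq -1$ precisely because $k\geq 3$. Because $H$ is an \emph{induced} subgraph, every edge of $G$ that is not in $E(H)$ has an endpoint in $V(G)\setminus V(H)$, so it is of one of two types: an edge joining two vertices of $V(G)\setminus V(H)$, or an edge $vu$ with $v\in V(G)\setminus V(H)$ and $u\in V(H)$. For an edge $e$ of the first type set $\sigma_G(e)=-1$; then its two ends get colors $1$ and $(-1)\cdot 1=-1$, which differ since $k\geq 3$. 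For an edge $vu$ of the second type set $\sigma_G(vu)=-1$ if $c_H(u)=1$ and $\sigma_G(vu)=+1$ otherwise; in the first subcase $\sigma_G(vu)c_G(u)=-1\neq 1=c_G(v)$, and in the second $\sigma_G(vu)c_G(u)=c_H(u)\neq 1=c_G(v)$. Since $c_H$ already respects the coloring constraints on $E(H)$, it follows that $c_G$ is a $k$-coloring of $(G,\sigma_G)$, so $\chi((G,\sigma_G))\leq k$, and combined with the previous paragraph $\chi((G,\sigma_G))=k$.

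I expect the only real subtlety to be conceptual rather than computational: one should not first fix the signature on the new edges and then hope the chromatic number is still $k$, but instead grow the coloring and the signature together, letting the intended colors dictate each new sign. The role of the hypothesis that $H$ is induced is exactly to ensure that no new edge is forced to join two vertices whose colors are already committed — such an edge might be impossible to sign, e.g.\ if both committed colors were $0$ — so that is the point I would state with care. Finally, the hypothesis $k\geq 3$ is genuinely needed (it is what lets color $1$ differ from $-1$), and the conclusion indeed fails for $k\in\{1,2\}$: if $G$ has an edge then $1\notin\Sigma_\chi(G)$, and if $G$ is non-bipartite then $2\notin\Sigma_\chi(G)$ by Proposition \ref{bipartite}.
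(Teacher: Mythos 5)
Your proposal is correct and is essentially the paper's own proof: the same choice of color $1$ for all vertices outside $V(H)$, the same sign assignment dictated by whether the $H$-endpoint has color $1$, and the same restriction argument for the lower bound. No gaps.
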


\begin{proof}
If $k\in{\Sigma_{\chi}(H)}$, then there is a signature $\sigma$ of $H$ such that $\chi((H,\sigma))=k$. Let
$\phi$ be a $k$-coloring of $(H,\sigma)$. Define a signature $\sigma'$ of $G$ as follows. Let $e \in E(G)$ with $e=uv$.

If $e\in E(H)$, then $\sigma'(e)=\sigma(e)$,

if $u,v\notin V(H)$ or if  $u\in V(H), v\notin V(H)$ and $\phi(u)=1$, then $\sigma'(e)=-1$,

if $u\in V(H), v\notin V(H)$ and $\phi(u)\neq 1$, then $\sigma'(e)=1$.

	It follows that $\phi$ can be extended to a $k$-coloring of $(G,\sigma')$ by assigning color 1 to each vertex of $V(G)\setminus V(H)$.
	Thus $\chi((G,\sigma'))\leq k$.
	Moreover, $(G,\sigma')$ has $(H,\sigma)$ as a subgraph with chromatic number $k$, hence, $\chi((G,\sigma'))\geq k$.
	Therefore, $\chi((G,\sigma'))=k$ and thus, $k\in{\Sigma_{\chi}(G)}$.
\end{proof}

\begin{theorem} \label{thm_reachable'}
Let $k \geq 4$ be an integer and $G$ be a graph. If $k\in{\Sigma_{\chi}(G)}$, then $k-1\in{\Sigma_{\chi}(G)}$.
\end{theorem}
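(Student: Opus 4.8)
The plan is to obtain Theorem~\ref{thm_reachable'} by chaining together the two structural facts already established: Theorem~\ref{critical}, which extracts critical subgraphs of every smaller chromatic value, and Lemma~\ref{lem_subgraph_reachable'}, which lifts an attained chromatic value from an induced subgraph back to the ambient graph.

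First I would unpack the hypothesis: $k \in \Sigma_{\chi}(G)$ means there is a signature $\sigma$ of $G$ with $\chi((G,\sigma)) = k$. The next step is to locate inside $(G,\sigma)$ a piece realizing the value $k-1$. Since $1 \leq k-1 \leq k$, Theorem~\ref{critical} applies and produces an induced subgraph $H$ of $G$ such that $(H,\sigma_H)$ is $(k-1)$-critical, where $\sigma_H$ denotes the restriction of $\sigma$ to $E(H)$; in particular $\chi((H,\sigma_H)) = k-1$, so the signature $\sigma_H$ witnesses $k-1 \in \Sigma_{\chi}(H)$.

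The last step transfers this back to $G$: since $k \geq 4$ we have $k-1 \geq 3$, so Lemma~\ref{lem_subgraph_reachable'}, applied with $k-1$ in place of $k$ and with $H$ as the induced subgraph, yields $k-1 \in \Sigma_{\chi}(G)$, which is the desired conclusion.

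I do not anticipate a genuine obstacle here: once Theorem~\ref{critical} and Lemma~\ref{lem_subgraph_reachable'} are available, the argument is essentially bookkeeping. The single point requiring care is the numerical threshold — Lemma~\ref{lem_subgraph_reachable'} needs its parameter to be at least $3$, which is exactly why the hypothesis $k \geq 4$ appears and cannot be weakened (for example, if $G$ is an odd circuit then $\Sigma_{\chi}(G) = \{3\}$, so $2 \notin \Sigma_{\chi}(G)$). Finally, I would remark that Theorem~\ref{thm_reachable'}, iterated starting from $k = M_{\chi}(G)$ downward and combined with Proposition~\ref{bipartite} (which pins down the lower end of the spectrum), immediately gives Theorem~\ref{cs}.
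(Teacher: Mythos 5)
Your proof is correct and follows essentially the same route as the paper: both arguments combine Theorem~\ref{critical} with Lemma~\ref{lem_subgraph_reachable'}. The only (harmless) difference is that you invoke Theorem~\ref{critical} with $i=k-1$ to get a $(k-1)$-critical induced subgraph directly, whereas the paper takes a $k$-critical induced subgraph and deletes a vertex to reach chromatic number $k-1$; your version is marginally more streamlined, and your remark that $k\geq 4$ is exactly what makes Lemma~\ref{lem_subgraph_reachable'} applicable to $k-1$ matches the paper's closing note.
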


\begin{proof}
By Theorem \ref{critical}, $(G,\sigma)$ contains an induced $k$-critical subgraph $(H,\sigma')$, where $\sigma'$ is the restriction of
$\sigma$ to $H$. Since $k \geq 4$, it follows that $|V(H)| > 3$. Hence, there is $u \in V(H)$ such that $\chi(H-u,\sigma'_u)= k-1$.	
Furthermore, $H-u$ is an induced subgraph of $G$. 
Thus,  $k-1\in{\Sigma_{\chi}(H-u)}$, and hence, $k-1\in{\Sigma_{\chi}(G)}$ by Lemma \ref{lem_subgraph_reachable'}.

Note that if $k=3$, then by Proposition \ref{bipartite}, $G$ is not a bipartite graph and thus $k$ can not be decreased to 2.
\end{proof}

Theorem \ref{cs} follows from Proposition \ref{bipartite} and Theorem \ref{thm_reachable'}.

\section{The signed chromatic spectrum of a graph} \label{sig_chrom_spec}

\begin{proposition} \label{char_pm_color}
Let $G$ be a nonempty graph. The following statements hold.
\begin{enumerate}
\item $\Sigma_{\chi_{\pm}}(G) = \{1\}$ if and only if $E(G)= \emptyset$.
\item if $E(G) \not = \emptyset$, then $m_{\chi_{\pm}}(G) = 2$.
\end{enumerate}
\end{proposition}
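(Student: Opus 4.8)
The plan is to mirror the structure of Proposition~\ref{bipartite}, exploiting the fact that the palette $M_1$ consists of the single color $0$, which conflicts with itself across every edge, while the palette $M_2=\{1,-1\}$ is closed under negation and has no fixed point of $x\mapsto -x$.

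First, for statement~1, suppose $E(G)=\emptyset$. Then the unique signature (up to switching) is the empty one, and since $1=2\cdot 0+1$ we have $M_1=\{0\}$; the constant map $c\equiv 0$ is vacuously a signed $1$-coloring, so $\chi_{\pm}((G,\sigma))=1$ and hence $\Sigma_{\chi_{\pm}}(G)=\{1\}$. Conversely, suppose $E(G)\neq\emptyset$ and fix any signature $\sigma$. A signed $1$-coloring would be a map $c:V(G)\to M_1=\{0\}$, i.e.\ $c\equiv 0$; but then for any edge $e=vw$ we have $c(v)=0=\sigma(e)\cdot 0=\sigma(e)c(w)$, contradicting the coloring condition. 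Thus $(G,\sigma)$ admits no signed $1$-coloring, so $\chi_{\pm}((G,\sigma))\geq 2$ for every $\sigma\in\Sigma(G)$; in particular $1\notin\Sigma_{\chi_{\pm}}(G)$, so $\Sigma_{\chi_{\pm}}(G)\neq\{1\}$. This argument simultaneously yields the lower bound $m_{\chi_{\pm}}(G)\geq 2$ needed for statement~2.

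For the matching upper bound in statement~2, I would exhibit one signature attaining the value $2$. Let $\sigma_{-}$ be the all-negative signature on $G$. Since $2=2\cdot 1$, we have $M_2=\{1,-1\}$, and the constant map $c\equiv 1$ satisfies, for every edge $e=vw$, $c(v)=1\neq -1=\sigma_{-}(e)c(w)$; hence $c$ is a signed $2$-coloring and $\chi_{\pm}((G,\sigma_{-}))\leq 2$. Together with the lower bound from statement~1 this gives $\chi_{\pm}((G,\sigma_{-}))=2$, and therefore $m_{\chi_{\pm}}(G)=2$.

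There is no genuine obstacle here; the proof is routine and the only point worth flagging is the contrast with the $\chi$ parameter of Section~\ref{chrom_spec}: over $\mathbb{Z}_k$ the all-negative signature on a non-bipartite graph still needs $3$ colors (Proposition~\ref{bipartite}), whereas over $M_n$ the value $n=2$ is already reachable, precisely because a monochromatic assignment into $\{1,-1\}$ is legal on an all-negative signed graph. Care should be taken only to record the parity conventions $M_1=\{0\}$ (from $n=2k+1$) and $M_2=\{\pm 1\}$ (from $n=2k$), and to note that $\chi_{\pm}$ is switching-invariant, so working with one representative per equivalence class in $\Sigma(G)$ is harmless.
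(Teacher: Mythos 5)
Your proof is correct and follows essentially the same route as the paper, which simply declares statement 1 obvious and notes for statement 2 that one color cannot suffice; you merely fill in the details the paper omits, including the explicit witness (the all-negative signature with the constant coloring $c\equiv 1$) for the upper bound $m_{\chi_{\pm}}(G)\leq 2$.
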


\begin{proof}
Statement 1. is obvious. For statement 2, since $G$ has at least one edge it can not be colored by using only one color,  hence $m_{\chi_{\pm}}(G) = 2$.
\end{proof}

A signed $k$-chromatic graph $(G,\sigma)$ is {\em signed $k$-chromatic critical} if $\chi_{\pm}((G-u,\sigma_u)) < k$,  for every $u \in V(G)$. 
In  \cite{Stiebitz_2015} Schweser and Stiebitz defined a graph $(G,\sigma)$ to be critical with respect to $\chi_{\pm}$ if
$\chi_{\pm}((H,\sigma')) < \chi_{\pm}((G,\sigma))$ for every proper signed subgraph $(H,\sigma')$ of $(G,\sigma)$, where $\sigma'$ is the restriction of $\sigma$ to $E(H)$.
However, for trees and circuits the two definitions coincide.
The analog statement to Proposition \ref{basic_p} for signed colorings is due to Schweser and Stiebitz in \cite{Stiebitz_2015}.

\begin{proposition}  [\cite{Stiebitz_2015}]  \label{basic_p'}
Let $(G,\sigma)$ be a signed graph. 	
\begin{enumerate}
		\item $(G,\sigma)$ is signed 1-critical if and only if $G = K_1$
		\item $(G,\sigma)$ is signed 2-critical if and only if $G = K_2$. 
		\item $(G,\sigma)$ is signed 3-critical if and only if $G$ is a balanced odd circuit or an unbalanced even circuit.  
	\end{enumerate}
\end{proposition}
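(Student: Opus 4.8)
The plan is to reduce everything to one structural fact: for a signed graph $(H,\rho)$ with $E(H)\ne\emptyset$, one has $\chi_{\pm}((H,\rho))\le 2$ if and only if $(H,\rho)$ is antibalanced. For the ``if'' direction, an all-negative signed graph is signed $2$-coloured by the constant map $v\mapsto +1$, and since $M_n=-M_n$ a switching at a vertex together with negating its colour sends signed colourings to signed colourings, so $\chi_{\pm}$ is switching-invariant; hence every antibalanced signed graph with an edge has $\chi_{\pm}=2$. For the ``only if'' direction, a signed $2$-colouring $c:V(H)\to\{+1,-1\}$ forces $\rho(e)=-c(u)c(w)$ on every edge $e=uw$, so switching at the vertices coloured $-1$ turns $(H,\rho)$ into an all-negative signed graph, i.e.\ $(H,\rho)$ is antibalanced. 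Together with Proposition \ref{char_pm_color} this gives: $\chi_{\pm}((H,\rho))\ge 3$ iff $E(H)\ne\emptyset$ and $(H,\rho)$ is not antibalanced. I will also use the reformulation (apply the balance criterion to $(H,-\rho)$) that $(H,\rho)$ is antibalanced iff every circuit $C$ of $H$ satisfies $\prod_{e\in C}\rho(e)=(-1)^{|C|}$. Statements 1 and 2 then follow exactly as in the proof of Proposition \ref{basic_p}: a signed $2$-critical graph has an edge while every proper vertex-deleted subgraph is edgeless, which forces $G=K_2$, and similarly $G=K_1$ in the $1$-critical case.

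For the nontrivial implication of statement 3, suppose $(G,\sigma)$ is signed $3$-critical. By the reduction above $G$ has an edge, $(G,\sigma)$ is not antibalanced, and $(G-u,\sigma_u)$ is antibalanced for every $u\in V(G)$. Call a circuit $C$ of $G$ \emph{bad} if $\prod_{e\in C}\sigma(e)\ne(-1)^{|C|}$. Since $(G,\sigma)$ is not antibalanced it has a bad circuit; since $(G-u,\sigma_u)$ is antibalanced for all $u$, no bad circuit can avoid a vertex, so every bad circuit is a Hamilton circuit of $G$ (in particular $G$ is connected) — fix one and call it $C$. If $G$ had a chord $e=xy\notin E(C)$, then $e$ splits $C$ into two shorter circuits $C_1,C_2$ with $|C_1|+|C_2|=|C|+2$ and $\bigl(\prod_{e\in C_1}\sigma(e)\bigr)\bigl(\prod_{e\in C_2}\sigma(e)\bigr)=\prod_{e\in C}\sigma(e)$, the chord contributing $\sigma(e)^2=1$; if both $C_1,C_2$ were good this product would equal $(-1)^{|C_1|+|C_2|}=(-1)^{|C|}$, contradicting badness of $C$, so some $C_i$ is a bad circuit missing a vertex — impossible. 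Hence $E(G)=E(C)$, so $G=C$ is a circuit, and being a non-antibalanced circuit it fails $\prod_{e\in C}\sigma(e)=(-1)^{|C|}$, which means precisely that $C$ is balanced of odd length or unbalanced of even length.

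For the reverse implication of statement 3, let $G$ be a balanced odd circuit or an unbalanced even circuit. In both cases $\prod_{e\in C}\sigma(e)\ne(-1)^{|C|}$, so $(G,\sigma)$ is not antibalanced and $\chi_{\pm}((G,\sigma))\ge 3$; on the other hand $G-u$ is a path, hence antibalanced with an edge, so $\chi_{\pm}((G-u,\sigma_u))=2$, and any signed $2$-colouring of $G-u$ extends to a signed $3$-colouring of $(G,\sigma)$ by assigning colour $0$ to $u$ (its two neighbours keep colours in $\{\pm1\}$, so the constraint on each incident edge holds). Thus $\chi_{\pm}((G,\sigma))=3$ while $\chi_{\pm}((G-u,\sigma_u))<3$ for every $u$, i.e.\ $(G,\sigma)$ is signed $3$-critical.

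The main obstacle is the first paragraph: pinning down the equivalence between $\chi_{\pm}\le 2$ and antibalance together with its circuit-product reformulation. Once that is in place, the second paragraph is the classical proof that $3$-critical graphs are odd circuits with ``odd'' replaced by ``bad'', and the only delicate point there is the parity bookkeeping when a chord splits $C$.
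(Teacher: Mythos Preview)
The paper does not include its own proof of this proposition; it is attributed to Schweser and Stiebitz \cite{Stiebitz_2015} and merely stated. So there is nothing in the paper to compare your argument against directly.

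That said, your proof is correct and self-contained. The key reduction --- that for a signed graph with at least one edge, $\chi_{\pm}\le 2$ is equivalent to antibalance --- is established cleanly, and the circuit-product reformulation $\prod_{e\in C}\sigma(e)=(-1)^{|C|}$ is exactly the right device. From there your argument for part 3 mirrors the paper's own proof of the unsigned analogue (Proposition~\ref{basic_p}): every ``bad'' circuit is Hamiltonian, a chord would produce a shorter bad circuit, hence $G$ is a single circuit; the translation from ``not antibalanced circuit'' to ``balanced odd or unbalanced even'' is then immediate. The reverse direction, extending a signed $2$-colouring of the path $G-u$ by assigning $0$ to $u$, is also fine since $M_3=\{0,\pm 1\}$ and the neighbours of $u$ carry nonzero colours. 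One cosmetic point: in the chord step you reuse the letter $e$ both for the chord and as the running variable in $\prod_{e\in C_i}$, which is harmless but worth cleaning up.
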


\begin{lemma}\label{lem_remove}
Let $k \geq 1$ be an integer. If $(G,\sigma)$ is signed $k$-chromatic, then $\chi_{\pm}((G-u,\sigma_u))\in \{k,k-1\}$, for every $u \in V(G)$. In particular,
if $(G,\sigma)$ is signed $k$-critical, then $\chi_{\pm}((G-u,\sigma_u)) = k-1$.
\end{lemma}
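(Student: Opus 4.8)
The plan is to follow the template of Lemma~\ref{lem_remove'}, replacing the cyclic colour shift used there by a manipulation tailored to the sets $M_n$. Restricting a signed colouring of $(G,\sigma)$ to $G-u$ gives a signed colouring of $(G-u,\sigma_u)$, so $\chi_\pm((G-u,\sigma_u)) \le \chi_\pm((G,\sigma)) = k$ is automatic, and the whole content is the lower bound. I would argue by contradiction: assume $\chi_\pm((G-u,\sigma_u)) \le k-2$ and show that then $(G,\sigma)$ already admits a signed $(k-1)$-colouring, contradicting signed $k$-chromaticity. The cases $k\le 2$ are immediate from Proposition~\ref{char_pm_color}, so I would assume $k\ge 3$ and split on the parity of $k$, since $M_{k-2}$ and $M_{k-1}$ differ precisely in whether they contain the colour $0$.

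For even $k=2t$ the extension is painless: $M_{k-2}=\{\pm1,\dots,\pm(t-1)\}$ does not use $0$ whereas $M_{k-1}=\{0,\pm1,\dots,\pm(t-1)\}$ does, so given a signed $(k-2)$-colouring $\phi$ of $(G-u,\sigma_u)$ one keeps $\phi$ on $V(G-u)$ and sets $\phi(u)=0$; every edge $uv$ then imposes $0\neq\sigma(uv)\phi(v)$, which holds because $0$ is not in the range of $\phi$. This is already a signed $(k-1)$-colouring of $(G,\sigma)$.

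The odd case $k=2t+1$ is the one that needs an idea. Here $M_{k-2}=\{0,\pm1,\dots,\pm(t-1)\}$ while $M_{k-1}=\{\pm1,\dots,\pm t\}$, so a signed $(k-2)$-colouring $\phi$ first has to be pushed off the colour $0$. I would use the observation that an edge $vw$ forbids $\phi(v)=\phi(w)=0$ \emph{irrespective of its sign}; hence $Z=\phi^{-1}(0)$ is an independent set and every neighbour of a vertex of $Z$ is coloured in $\{\pm1,\dots,\pm(t-1)\}$. Consequently each $v\in Z$ may be recoloured by $+t$ or $-t$ independently of the others without creating a conflict, and I would use this freedom to clear the colour $-t$ around $u$: recolour $v\in Z$ by $\sigma(uv)\,t$ if $v\in N(u)$, and by $t$ otherwise, leaving all other vertices of $V(G-u)$ fixed. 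Then for every $v\in N(u)$ the quantity $\sigma(uv)\phi'(v)$ lies in $\{t\}\cup\{\pm1,\dots,\pm(t-1)\}$, so the colour $-t$ is admissible at $u$; assigning it yields a signed $(k-1)$-colouring of $(G,\sigma)$.

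In both parities the assumption $\chi_\pm((G-u,\sigma_u))\le k-2$ thus collapses $\chi_\pm((G,\sigma))$ to at most $k-1$, a contradiction, which proves $\chi_\pm((G-u,\sigma_u))\in\{k-1,k\}$; the ``in particular'' clause is then just the definition of signed $k$-critical. I expect the main obstacle to be exactly the odd case: the first things one tries — recolouring every zero to a single fixed colour of $M_{k-1}$, or shifting all colours to vacate a whole antipodal pair $\{a,-a\}$ — can leave $u$ with no admissible colour when $u$ has many neighbours, and the resolution is to exploit independence of $Z$ so that the zeros can be recoloured non-uniformly, matched to the edge signs at $u$.
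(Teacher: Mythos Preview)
Your proof is correct and follows the same strategy as the paper: dispose of $k\le 2$ via Proposition~\ref{char_pm_color}, and for $k\ge 3$ extend a signed $(k-2)$-colouring of $G-u$ by assigning $u$ the new colour ($0$ when $k$ is even, $\pm\frac{k-1}{2}$ when $k$ is odd) and, in the odd case, recolouring the zeros adjacent to $u$ according to the sign of the edge $uv$. Your odd case is in fact slightly more careful than the paper's, since you explicitly recolour \emph{all} vertices of $Z=\phi^{-1}(0)$ (not only those in $N(u)$), which is needed for the resulting map to take values in $M_{k-1}$.
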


\begin{proof}
For $k\in \{1,2\}$, the statement follows with Proposition \ref{char_pm_color}. Hence, we may assume that $k\geq3$.
Clearly, $\chi_{\pm}((G-u,\sigma_u)) \leq \chi_{\pm}((G,\sigma)) = k$.
Suppose to the contrary that $\chi_{\pm}((G-u,\sigma_u))\leq k-2$ and let $\phi$ be a
$(k-2)$-coloring of $(G-u,\sigma_u)$.
We shall extend $\phi$ to a $(k-1)$-coloring of $(G,\sigma)$.
If $k$ is even, then assign color 0 to vertex $u$, we are done.
If $k$ is odd, then assign color $\frac{k-1}{2}$ to vertex $u$, and for each vertex $v$ such that $\phi(v)=0$ and $\sigma(uv)=-1$, recolor $v$ with color $\frac{k-1}{2}$, and for each vertex $v$ such that $\phi(v)=0$ and $\sigma(uv)=1$, recolor $v$ with color $-\frac{k-1}{2}$ to obtain a $(k-1)$-coloring of $(G,\sigma)$.
Hence $\chi_{\pm}((G,\sigma))\leq k-1<k$, a contradiction. Clearly, if $(G,\sigma)$ is signed $k$-critical, then $\chi_{\pm}((G-u,\sigma_u))=k-1$.
\end{proof}

\begin{theorem} \label{s_critical}
	Let $(G,\sigma)$ be a signed graph and $k \geq 1$. If $\chi_{\pm}((G,\sigma)) = k$, then
	$(G,\sigma)$ contains an induced signed $i$-critical subgraph for each $i \in \{1, \dots,k\}$.
\end{theorem}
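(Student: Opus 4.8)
The plan is to mimic the structure already used for Theorem~\ref{critical}, replacing Lemma~\ref{lem_remove'} by its signed analogue Lemma~\ref{lem_remove}. The statement is exactly the signed-coloring counterpart of Theorem~\ref{critical}, so the same two-line argument should go through once we know that a signed $k$-chromatic graph has, for each $i \le k$, an induced subgraph whose signed chromatic number is precisely $i$, and that such a subgraph can be further pared down to be critical.

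First I would argue by downward induction on $i$, starting from $i=k$. The base case is trivial: $(G,\sigma)$ itself is signed $k$-chromatic. For the inductive step, suppose $(G,\sigma)$ contains an induced subgraph $(H,\tau)$ with $\chi_{\pm}((H,\tau)) = i$ for some $i$ with $2 \le i \le k$; I want an induced subgraph with signed chromatic number $i-1$. Here $\tau$ is understood as the restriction of $\sigma$ to $E(H)$, so any induced subgraph of $(H,\tau)$ is also an induced subgraph of $(G,\sigma)$ with the inherited signature, and the chain of induced subgraphs we build stays inside $(G,\sigma)$. By Proposition~\ref{char_pm_color} we have $i \ge 2$ forces $E(H) \ne \emptyset$, and for $i \ge 3$ the subgraph $(H,\tau)$ is nontrivial; pick a vertex $u \in V(H)$ and consider $(H-u,\tau_u)$. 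By Lemma~\ref{lem_remove}, $\chi_{\pm}((H-u,\tau_u)) \in \{i, i-1\}$. If for some choice of $u$ it equals $i-1$ we are done. Otherwise $\chi_{\pm}((H-u,\tau_u)) = i$ for every $u$, which means $(H,\tau)$ is not signed $i$-critical; deleting vertices one at a time (choosing at each step a vertex whose removal keeps the signed chromatic number equal to $i$) we eventually reach an induced subgraph $(H',\tau')$ that \emph{is} signed $i$-critical, since the process must terminate (for instance at the point where the graph becomes $K_1$, which is signed $1$-critical, contradiction if $i\ge 2$, so it terminates earlier at a genuinely signed $i$-critical graph). Now $(H',\tau')$ has $|V(H')| \ge 2$ when $i \ge 2$, so it has a vertex $v$, and Lemma~\ref{lem_remove} applied to the critical graph $(H',\tau')$ gives $\chi_{\pm}((H'-v,\tau'_v)) = i-1$. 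Since $(H'-v,\tau'_v)$ is an induced subgraph of $(G,\sigma)$, the induction proceeds. This simultaneously produces, at each level, an induced signed $i$-critical subgraph: the graph $(H',\tau')$ obtained in the step from $i$ down to $i-1$ is exactly an induced signed $i$-critical subgraph of $(G,\sigma)$. For $i=1$ the required induced subgraph is any single vertex ($K_1$ is signed $1$-critical by Proposition~\ref{basic_p'}), which exists since $G$ is nonempty whenever $k\ge 1$.

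The only point requiring a little care — and the one I would flag as the main obstacle — is the transition between ``chromatic number exactly $i$'' and ``signed $i$-critical.'' Lemma~\ref{lem_remove} as stated only guarantees that deleting a vertex drops the signed chromatic number by at most one; it does not by itself hand us a critical subgraph. So one must run the vertex-deletion process: repeatedly delete a vertex that keeps $\chi_{\pm}$ at its current value $i$, until no such vertex exists, at which point the graph is signed $i$-critical by definition. One must check this process cannot ``overshoot'': it can only stop when every vertex deletion strictly decreases $\chi_{\pm}$, i.e.\ exactly when the graph is critical, and it must stop because the graph shrinks; it cannot reach the empty graph while $\chi_{\pm}$ is still $i \ge 2$ because $K_1$ has signed chromatic number $1$, so the process halts at a nonempty signed $i$-critical induced subgraph. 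With that observation in hand, the theorem is an immediate consequence of Lemma~\ref{lem_remove}, exactly parallel to how Theorem~\ref{critical} follows from Lemma~\ref{lem_remove'}.
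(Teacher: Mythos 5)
Your proposal is correct and is exactly the argument the paper has in mind: the paper states Theorem~\ref{s_critical} without proof as a direct consequence of Lemma~\ref{lem_remove} (mirroring how Theorem~\ref{critical} follows from Lemma~\ref{lem_remove'}), and your downward induction with the vertex-deletion process to reach a critical subgraph is the standard way of filling in that claim. No gaps; the care you take about the process terminating at a nonempty signed $i$-critical subgraph is the right (and only) point needing attention.
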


\begin{lemma}\label{lem_subgraph_reachable}
	Let $k\geq 2$ be an integer and $H$ be an induced subgraph of a graph $G$. If $k\in \Sigma_{\chi_{\pm}}(H)$, then $k\in \Sigma_{\chi_{\pm}}(G)$. 
\end{lemma}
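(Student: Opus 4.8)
The plan is to mimic the proof of Lemma~\ref{lem_subgraph_reachable'}, adapting the color extension to the $\chi_{\pm}$ setting. Suppose $k \in \Sigma_{\chi_{\pm}}(H)$, so there is a signature $\sigma$ of $H$ with $\chi_{\pm}((H,\sigma)) = k$, and let $\phi$ be a signed $k$-coloring of $(H,\sigma)$ with values in $M_k$. Since $k \geq 2$, the color set $M_k$ contains at least two elements; pick a fixed color, say $a \in M_k$, that will be used on all vertices outside $H$. As in the unsigned case, I want to extend $\phi$ to a signed $k$-coloring of $(G,\sigma')$ for a suitable signature $\sigma'$ of $G$ that agrees with $\sigma$ on $E(H)$.

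The key step is choosing $\sigma'$ on the remaining edges so that assigning color $a$ to every vertex of $V(G)\setminus V(H)$ violates no constraint. For an edge $e = uv$ with both endpoints outside $H$, we need $a \neq \sigma'(e)\,a$, which holds precisely when $\sigma'(e) = -1$ (since $a \neq 0$ because $0 \notin M_k$ when... actually $0 \in M_k$ only if $k$ is odd, but even then we should just pick $a \neq 0$, which is possible as $k \geq 2$ so $|M_k| \geq 2$ and at least one nonzero color exists). Wait — for the edge constraint on a monochromatic edge colored $a$, we need $a \neq \sigma'(e) a$, i.e. $\sigma'(e) = -1$ and $a \neq 0$; so fix $a = 1 \in M_k$. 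For an edge $e = uv$ with $u \in V(H)$, $v \notin V(H)$, we need $\phi(u) \neq \sigma'(e)\cdot 1 = \sigma'(e)$, so set $\sigma'(e) = -1$ if $\phi(u) = 1$, and $\sigma'(e) = 1$ if $\phi(u) \neq 1$ (this works since $\phi(u) \neq -1$ would be needed in the second case, but if $\phi(u) = -1$ we instead take $\sigma'(e) = -1$; so more carefully: choose $\sigma'(e) \in \{\pm 1\}$ with $\sigma'(e) \neq \phi(u)$ whenever $\phi(u) \in \{\pm 1\}$, and arbitrarily otherwise). With this choice $\phi$ extended by $1$ on $V(G)\setminus V(H)$ is a signed $k$-coloring, so $\chi_{\pm}((G,\sigma')) \leq k$.

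For the reverse inequality, $(G,\sigma')$ contains $(H,\sigma)$ as an induced signed subgraph, and $\chi_{\pm}$ is monotone under taking subgraphs, so $\chi_{\pm}((G,\sigma')) \geq \chi_{\pm}((H,\sigma)) = k$. Hence $\chi_{\pm}((G,\sigma')) = k$, giving $k \in \Sigma_{\chi_{\pm}}(G)$.

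The only genuine obstacle is the bookkeeping around the color $0$: in the $\chi_{\pm}$ model the palette $M_k$ is symmetric and may contain $0$, so one must be slightly careful to use a nonzero color on the outside vertices (so that a negative monochromatic edge is properly colored) and to pick the sign of each crossing edge to differ from $\phi(u)$ when $\phi(u) \in \{+1,-1\}$. This is entirely routine; the structure of the argument is identical to Lemma~\ref{lem_subgraph_reachable'}. I would state it cleanly by fixing the outside color to be $1$ and defining $\sigma'(uv) = -1$ when $\phi(u) \in \{0,1\}$ and $\sigma'(uv) = 1$ when $\phi(u) \notin \{0,1\}$, then verifying the three edge types directly.
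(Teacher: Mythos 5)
Your proposal is correct and is exactly the argument the paper intends: the paper proves this lemma only by remarking that it is ``similar to the proof of Lemma~\ref{lem_subgraph_reachable'}'', and your adaptation (coloring all outside vertices with the nonzero color $1$, making edges between outside vertices negative, and choosing the sign of each crossing edge $uv$ so that $\sigma'(uv)\neq\phi(u)$, together with monotonicity of $\chi_{\pm}$ under subgraphs for the lower bound) is precisely the needed adjustment for the symmetric palette $M_k$.
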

The proof is similar to the proof of Lemma \ref{lem_subgraph_reachable'}.

\begin{theorem} \label{thm_reachable}
Let $k\geq3$ be an integer and $G$ be a graph. If $k\in \Sigma_{\chi_{\pm}}(G)$, then $k-1\in \Sigma_{\chi_{\pm}}(G)$.
\end{theorem}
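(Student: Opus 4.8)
The plan is to mirror the proof of Theorem~\ref{thm_reachable'} as closely as possible, substituting the signed-coloring analogues of the ingredients that were used there. Concretely: since $k \in \Sigma_{\chi_{\pm}}(G)$, there is a signature $\sigma$ with $\chi_{\pm}((G,\sigma)) = k$, and by Theorem~\ref{s_critical} the signed graph $(G,\sigma)$ contains an induced signed $k$-critical subgraph $(H,\sigma')$, where $\sigma'$ is the restriction of $\sigma$ to $E(H)$. The idea is then to delete one suitable vertex of $H$ to drop the signed chromatic number by exactly one, and to lift the resulting signature back up to all of $G$ using Lemma~\ref{lem_subgraph_reachable}.

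The one place where the argument cannot be copied verbatim is the size of the critical subgraph. In the $\chi$-case, $k \geq 4$ forced $|V(H)| > 3$ via Proposition~\ref{basic_p}, so that $H$ was not just an odd circuit and a vertex deletion was available. Here we only have $k \geq 3$, so I must handle $k = 3$ separately, and for $k \geq 4$ I use Proposition~\ref{basic_p'} to conclude that a signed $k$-critical graph with $k \geq 4$ has more than three vertices (it is neither $K_1$, nor $K_2$, nor a balanced odd circuit or unbalanced even circuit). So for $k \geq 4$: pick $u \in V(H)$; by Lemma~\ref{lem_remove} (the ``in particular'' clause, since $(H,\sigma')$ is signed $k$-critical) we get $\chi_{\pm}((H-u,\sigma'_u)) = k-1$, hence $k-1 \in \Sigma_{\chi_{\pm}}(H-u)$; since $H-u$ is an induced subgraph of $G$, Lemma~\ref{lem_subgraph_reachable} (applicable because $k - 1 \geq 3 \geq 2$) gives $k-1 \in \Sigma_{\chi_{\pm}}(G)$.

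For the base case $k = 3$ one cannot always delete a vertex from a 3-critical $H$ and still get $2 \in \Sigma_{\chi_{\pm}}(G)$ by the same route, so I would argue directly: I need to show $2 \in \Sigma_{\chi_{\pm}}(G)$, i.e.\ that $G$ has some signature that is signed $2$-colorable. By Proposition~\ref{char_pm_color}(2), since $3 \in \Sigma_{\chi_{\pm}}(G)$ implies $E(G) \neq \emptyset$, we have $m_{\chi_{\pm}}(G) = 2$; that is precisely the statement $2 \in \Sigma_{\chi_{\pm}}(G)$. So the case $k = 3$ is immediate from Proposition~\ref{char_pm_color}, and this is exactly the phenomenon that makes the signed chromatic spectrum ``nicer'' at the bottom than the ordinary one (where $3$ could not be lowered to $2$ for nonbipartite $G$).

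The main obstacle, and the only real content beyond bookkeeping, is verifying that the hypotheses of Lemma~\ref{lem_subgraph_reachable} and Lemma~\ref{lem_remove} line up correctly across the case split — in particular making sure the $k = 3$ case is not swept into the general deletion argument, since a signed $3$-critical graph can be an unbalanced even circuit or a balanced odd circuit and deleting a vertex there only certifies $2 \in \Sigma_{\chi_{\pm}}(\text{path})$, which would still suffice via Lemma~\ref{lem_subgraph_reachable}, but it is cleaner and safer to invoke Proposition~\ref{char_pm_color} directly. Once the case split is set up, every individual step is a direct citation of a previously established result, so no genuine calculation remains.
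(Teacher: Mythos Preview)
Your proof is correct and follows the same route as the paper: pass to an induced signed $k$-critical subgraph via Theorem~\ref{s_critical}, delete a vertex to drop the signed chromatic number by one via Lemma~\ref{lem_remove}, and lift back to $G$ via Lemma~\ref{lem_subgraph_reachable}. The only difference is that the paper does \emph{not} split off $k=3$: since Lemma~\ref{lem_subgraph_reachable} requires only $k-1 \geq 2$ (unlike Lemma~\ref{lem_subgraph_reachable'}, which needs $k-1 \geq 3$), and since Proposition~\ref{basic_p'} guarantees $|V(H)| \geq 3$ for any signed $k$-critical $H$ with $k \geq 3$, the deletion argument runs uniformly for all $k \geq 3$ --- exactly as you yourself observe in your final paragraph.
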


\begin{proof}
	
By Theorem \ref{s_critical}, $(G,\sigma)$ contains an induced signed $k$-critical subgraph $(H,\sigma')$, where $\sigma'$ is the restriction of
$\sigma$ to $H$. Since $k \geq 3$, it follows that $|V(H)| \geq 3$. Hence, there is $u \in V(H)$ such that $\chi_{\pm}(H-u,\sigma'_u)= k-1$.	
Furthermore, $H-u$ is an induced subgraph of $G$. 
Thus,  $k-1\in{\Sigma_{\chi_{\pm}}(H-u)}$, and hence, $k-1\in{\Sigma_{\chi_{\pm}}(G)}$ by Lemma \ref{lem_subgraph_reachable}.	
\end{proof}

Theorem \ref{cs_pm} follows from Proposition \ref{char_pm_color} and Theorem \ref{thm_reachable}.




\end{document}